
\documentclass[12pt,a4paper]{article}%
\usepackage[utf8]{inputenc}
\usepackage{hyperref}
\usepackage{amsmath}
\usepackage{amsfonts}
\usepackage{amssymb}
\usepackage{graphicx}%
\setcounter{MaxMatrixCols}{30}
\providecommand{\U}[1]{\protect\rule{.1in}{.1in}}
\newtheorem{theorem}{Theorem}

\newtheorem{conjecture}[theorem]{Conjecture}
\newtheorem{corollary}[theorem]{Corollary}

\newtheorem{lemma}[theorem]{Lemma}

\newtheorem{proposition}[theorem]{Proposition}
\newtheorem{remark}[theorem]{Observation}

\newenvironment{proof}[1][Proof]{\noindent\textbf{#1.} }{\ \hfill \rule{0.5em}{0.5em}\bigskip}
\graphicspath{{D:/Dropbox/Riste-Sedlar/MetricDIm/Jelena/Slike/}}

\textwidth=16cm
\hoffset=-1.2cm
\voffset=-2.cm
\textheight=23cm
\begin{document}

\title{Mixed metric dimension of graphs with edge disjoint cycles}
\author{Jelena Sedlar$^{1}$,\\Riste \v Skrekovski$^{2,3}$ \\[0.3cm] {\small $^{1}$ \textit{University of Split, Faculty of civil
engineering, architecture and geodesy, Croatia}}\\[0.1cm] {\small $^{2}$ \textit{University of Ljubljana, FMF, 1000 Ljubljana,
Slovenia }}\\[0.1cm] {\small $^{3}$ \textit{Faculty of Information Studies, 8000 Novo
Mesto, Slovenia }}\\[0.1cm] }
\maketitle

\begin{abstract}
In a graph $G$, the cardinality of the smallest ordered set of vertices that
distinguishes every element of $V(G)\cup E(G)$ is called the mixed metric
dimension of $G$. In this paper we first establish the exact value of the
mixed metric dimension of a unicycic graph $G$ which is derived from the
structure of $G$. We further consider graphs $G$ with edge disjoint cycles in
which a unicyclic restriction $G_{i}$ is introduced for each cycle $C_{i}.$
Applying the result for unicyclic graph to each $G_{i}$ then yields the exact
value of the mixed metric dimension of such a graph $G$. The obtained formulas
for the exact value of the mixed metric dimension yield a simple sharp upper
bound on the mixed metric dimension, and we conclude the paper conjecturing
that the analogous bound holds for general graphs with prescribed cyclomatic number.

\end{abstract}



\section{Introduction}

The distance between vertices $u$ and $v$ from a graph $G$ will be denoted by
$d_{G}(u,v)$ (or simply $d(u,v)$ if no confusion arises). If there is a vertex
$s\in V(G)$ such that $d_{G}(u,s)\neq d_{G}(v,s)$, then we say that $s$
\emph{distinguishes} (or \emph{resolves}) $u$ and $v$. If any two vertices $u$
and $v$ from $G$ are distinguished by at least one vertex of a subset
$S\subseteq V(G)$, then we say that $S$ is a \emph{metric generator} for $G$.
The cardinality of the smallest metric generator is called the \emph{metric
dimension} of $G$, and it is denoted by $\dim(G)$. This notion for graphs was
independently introduced by \cite{Harary1976} and \cite{Slater1975}, under the
names resolving sets and locating sets, respectively. Even before this notion
was introduced for the realm of metric spaces \cite{Blumenthal1953}.

The concept of metric dimension was recently extended from resolving vertices
to resolving edges of a graph by Kelenc, Tratnik and Yero~\cite{Kel}.
Similarly as above, a vertex $s\in V(G)$ \emph{distinguishes} two edges
$e,f\in E(G)$ if $d_{G}(s,e)\neq d_{G}(s,f)$, where $d_{G}(e,s)=d_{G}%
(uv,s)=\min\{d(u,s),d(v,s)\}$. A set of vertices $S\subseteq V(G)$ is an
\emph{edge metric generator} for $G$, if any two edges of $G$ are
distinguished by a vertex of $S$. The cardinality of the smallest edge metric
generator is called the \emph{edge metric dimension} of $G$, and it is denoted
by $\mathrm{edim}(G)$. This variation of metric dimension also attracted
interest (see \cite{Peterin2020}, \cite{SedSkreBounds}, \cite{Zhu},
\cite{Zubrilina}).

Very recently the mixed metric dimension was introduced by Kelenc, Kuziak,
Taranenko, and Yero~\cite{Kelm}. Here $S$ is a \emph{mixed metric generator},
if it distinguishes any two elements from $V(G)\cup E(G)$, and similarly as
above, the size of the smallest such a set $S$ is the \emph{mixed metric
dimension} of $G$, and it is denoted by $\mathrm{mdim}(G)$. The paper contains
lower and upper bounds for various graph classes. For a wider and systematic
introduction of the topic metric dimension that encapsulates all three above
mentioned variations, we recommend the PhD thesis of Kelenc~\cite{KelPhD}.

In this paper, after this introductory section, we will first provide the
necessary definitions and notation in the next section. In the third section
the exact value of the mixed metric dimension for unicyclic graphs is
obtained, while in the fourth section that result is extended to graphs with
more cycles than one, but which cycles are edge disjoint. We conclude the
paper with the fifth section in which concluding remarks and possible
directions for further research are given.

\section{Preliminaries}

In a graph $G$, let $n$ denote the number of vertices and $m$ the number of
edges. The \emph{degree} of a vertex $v\in V(G)$ is defined as the number of
neighbors of $v$ in $G$ and denoted by $\deg(v).$ We say that a vertex $v\in
V(G)$ is a \emph{leaf} if $\deg(v)=1.$ By $L_{1}(G)$ we denote the number of
leaves in $G$.

A graph $G$ with edge disjoint cycles is called a \emph{cactus}. A cactus with
only one cycle is also called a \emph{unicyclic} graph. Let $G$ be a cactus,
let $C$ be a cycle in $G$ and let $v$ be a vertex from $C.$ By $T_{v}$ we will
denote the connected component of $G-E(C)$ containing the vertex $v.$ If $G$
is a unicyclic graph then $T_{v}$ is a tree for every $v\in V(C),$ but if $G$
has more than one cycle then $T_{v}$ need not be a tree, i.e. it may contain
cycles. We say that a vertex $v$ from a cycle $C$ is a \emph{root} vertex on
$C$ if $T_{v}$ is nontrivial, otherwise we say that $v$ is a \emph{non-root}
vertex. The number of all root vertices on a cycle $C$ is denoted by
$\mathrm{rt}(C).$ Obviously, every vertex on a cycle of the degree $\geq3$ is
a root vertex, while all vertices on a cycle which are of degree $2$ are
non-root vertices. For three vertices $u,$ $v$ and $w$ from a cycle $C$ in a
cactus $G$ we say that they form a \emph{geodesic triple} of vertices if
$d(u,v)+d(v,w)+d(w,u)=\left\vert V(C)\right\vert .$

The \emph{cyclomatic number} $c(G)$ of a graph $G$ is defined by $c(G)=m-n+1.$
Observe that for a cactus graph $G$ the cyclomatic number $c(G)$ equals the
number of cycles in $G.$ The \emph{(vertex) connectivity} $\kappa(G)$ of a
graph $\emph{G}$ is the minimum size of a vertex cut, i.e. any subset of
vertices $S\subseteq V(G)$ such that $G-S$ is disconnected or has only one
vertex. We say that a graph $G$ is $k$\emph{-connected} if $\kappa(G)\geq k.$
Notice that for a connected graph $G$ which has a leaf it obviously holds that
$\kappa(G)=1.$ Also, notice that for every cactus graph $G$ with at least two
cycles it holds that $\kappa(G)=1,$ even if $G$ does not contain a leaf.

Let $S\subseteq V(G)$ be a set of vertices in a cactus graph $G$. We say that
a vertex $v$ from a cycle $C$ in $G$ is $S$\emph{-active} if $T_{v}$ contains
a vertex from $S$. By $a_{S}(C)$ we denote the number of $S$-active vertices
on $C$. Any shortest path between two vertices from $S$ is called a
$S$\emph{-closed} path. Let $x$ and $x^{\prime}$ be a pair of elements from
the set $V(G)\cup E(G)$. We say that a pair $x$ and $x^{\prime}$ is
\emph{enclosed} by $S$ if there is a $S$-closed path containing $x$ and
$x^{\prime}$. We say that a pair $x$ and $x^{\prime}$ is \emph{half-enclosed}
by $S$ if there is a vertex $s\in S$ such that a shortest path from $s$ to $x$
contains $x^{\prime}$ or a shortest path from $s$ to $x^{\prime}$ contains $x$.

\begin{remark}
\label{Obs_enclosure}Let $G$ be a graph, let $S\subseteq V(G)$ be a set of
vertices in $G$ and let $x$ and $x^{\prime}$ be a pair of elements from the
set $V(G)\cup E(G)$. If $x$ and $x^{\prime}$ are enclosed by $S,$ then $x$ and
$x^{\prime}$ are distinguished by $S.$ If $x$ and $x^{\prime}$ are
half-enclosed by $S$ then $x$ and $x^{\prime}$ are distinguished by $S$ in all
cases except possibly when $x$ and $x^{\prime}$ are a pair consisting of a
vertex and an edge which are incident to each other.
\end{remark}

The following interesting result was proved in~\cite{Kelm}.

\begin{proposition}
\label{Prop_Kelm}For every tree $T$, it holds $\mathrm{mdim}(T)=L_{1}(T)$.
\end{proposition}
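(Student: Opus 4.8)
The plan is to establish the two inequalities $\mathrm{mdim}(T)\ge L_{1}(T)$ and $\mathrm{mdim}(T)\le L_{1}(T)$ separately, the first giving a lower bound by forcing every leaf into a mixed metric generator, and the second by exhibiting the set of all leaves as such a generator.

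For the lower bound, I would show that every leaf of $T$ belongs to every mixed metric generator. Let $\ell$ be a leaf with unique neighbor $u$, and consider the incident pair consisting of the vertex $u$ and the pendant edge $e=u\ell$. For any vertex $s\neq\ell$ the unique path from $s$ to $\ell$ passes through $u$, so $d(s,\ell)=d(s,u)+1$ and hence $d(s,e)=\min\{d(s,u),d(s,\ell)\}=d(s,u)$; thus $s$ fails to distinguish $u$ from $e$. Since only $\ell$ itself satisfies $d(\ell,u)=1\neq0=d(\ell,e)$, every mixed metric generator must contain $\ell$, and ranging over all leaves yields $\mathrm{mdim}(T)\ge L_{1}(T)$. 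This is precisely the exceptional incident vertex--edge configuration flagged in Remark~\ref{Obs_enclosure}.

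For the upper bound, I would verify that the set $S$ of all leaves is a mixed metric generator, treating the three types of pairs and exploiting that shortest paths in a tree are unique and that each of the two subtrees obtained by deleting an edge contains a leaf. The recurring device is to select a leaf ``beyond'' a chosen endpoint. Concretely, two distinct vertices $u,v$ are distinguished by a leaf reached from $u$ by moving away from $v$; two distinct edges $e,f$ are distinguished by a leaf $\ell$ reached beyond the endpoint $a$ of $e$ farthest from $f$, since then $d(\ell,e)=d(\ell,a)$ while $d(\ell,f)=d(\ell,a)+d(a,f)>d(\ell,a)$; and a non-incident vertex--edge pair is handled by a leaf beyond the vertex, away from the edge. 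Most of these pairs are in fact enclosed or half-enclosed by $S$, so Remark~\ref{Obs_enclosure} applies directly.

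The main obstacle, and the step deserving the most care, is the incident vertex--edge pair $(v,e)$ with $v$ an endpoint of $e=vw$ --- exactly the case Remark~\ref{Obs_enclosure} excludes from the half-enclosure principle. Here I would argue that the component of $T-e$ containing $w$ is a nonempty subtree and so possesses a leaf $\ell$, which then satisfies $d(\ell,e)=d(\ell,w)=d(\ell,v)-1\neq d(\ell,v)$ and hence distinguishes the pair. Since this is the same configuration that forces leaves into every generator, the lower and upper bounds meet exactly, giving $\mathrm{mdim}(T)=L_{1}(T)$.
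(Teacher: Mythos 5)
Your proof is correct, but note that the paper itself offers no proof of this proposition to compare against: it is quoted as a known result from the reference by Kelenc, Kuziak, Taranenko and Yero (``The following interesting result was proved in~\cite{Kelm}''). Judged on its own merits, your argument is sound and complete. The lower bound (every leaf $\ell$ is forced into every generator because no vertex other than $\ell$ can distinguish the pendant edge $u\ell$ from its support vertex $u$) is exactly the leaf-forcing argument the paper does use, in its proofs of Lemma~\ref{Lemma_mixed} and of the lower-bound lemma for cactus graphs. For the upper bound, your case analysis checks out: in each of the three pair types you exploit that, for any edge $e$, each component of $T-e$ contains a leaf of $T$ (the only subtree leaf that might fail to be a leaf of $T$ is the endpoint of $e$, and a singleton component is itself a leaf of $T$), and that the endpoint of $e$ farther from a second edge $f$ lies in the component of $T-e$ not containing $f$, so the distance computations $d(\ell,e)=d(\ell,a)$ and $d(\ell,f)=d(\ell,a)+d(a,f)>d(\ell,e)$ are valid. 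You also correctly isolate the incident vertex--edge pair as the case escaping the half-enclosure principle of Observation~\ref{Obs_enclosure} and resolve it with a leaf on the far side of the edge. Compared with the machinery the paper develops for unicyclic and cactus graphs (enclosure and half-enclosure, $S$-active vertices, geodesic triples), your tree argument is a more elementary specialization, made possible by the uniqueness of shortest paths in trees; the paper's heavier apparatus is what is needed once a cycle is present, where ``beyond the endpoint'' leaves need not exist and antipodal ambiguities arise.
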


In what follows we will give a counterpart of this result for unicyclic graphs
and afterwards extend that result to cactus graphs.


\section{Mixed dimension of unicyclic graphs}

We first want to give a characterization of the sets $S\subseteq V(G)$ which
are mixed metric generators in a unicyclic graph $G$.

\begin{lemma}
\label{Lemma_mixed} Let $G$ be a unicyclic graph with $C$ being its only
cycle. A set $S\subseteq V(G)$ is a mixed metric generator if and only if $S$
contains all leaves from $G$ and there is a geodesic triple of $S$-active
vertices on $C.$
\end{lemma}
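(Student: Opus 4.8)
The plan is to prove both directions of the biconditional, treating the necessity of each condition and the sufficiency of their conjunction separately. The two conditions to characterize are: (i) $S$ contains all leaves of $G$, and (ii) there is a geodesic triple of $S$-active vertices on $C$.

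For the necessity direction, I would argue the contrapositive for each condition. First, if some leaf $\ell$ is not in $S$, then let $u$ be the neighbor of $\ell$ and consider the vertex $\ell$ together with the edge $\ell u$. For any $s\in S$, every shortest path from $s$ to $\ell$ passes through $u$, so $d(s,\ell)=d(s,u)+1=d(s,\ell u)+1$; hence $d(s,\ell u)=d(s,u)$ for all $s$, meaning $S$ fails to distinguish the vertex $\ell$ from the incident edge $\ell u$, so $S$ is not a mixed metric generator. This is the same phenomenon flagged in Remark~\ref{Obs_enclosure} as the exceptional incident vertex-edge case. Second, suppose no geodesic triple of $S$-active vertices exists on $C$. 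Intuitively this means all $S$-active vertices lie on a short arc of $C$, so that some pair of antipodal-like elements on the far side of the cycle receives equal distances from every vertex of $S$. I would make this precise by showing that the absence of a geodesic triple forces all $S$-active vertices into a half of the cycle, and then exhibit two elements (a vertex and edge, or two edges) symmetric about the cycle that no $s\in S$ can resolve, because for each $s$ the shortest path to both elements enters the cycle at the same root and wraps around identically.

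For the sufficiency direction, assume both conditions hold and take an arbitrary pair $x,x'\in V(G)\cup E(G)$; I must show some $s\in S$ distinguishes them. The natural strategy is to invoke Remark~\ref{Obs_enclosure}: it suffices to show that every pair is either enclosed or half-enclosed by $S$, and that the one exceptional half-enclosed case (an incident vertex-edge pair) never actually causes failure here. I would split into cases according to where $x$ and $x'$ lie relative to the cycle. If both lie in a single tree $T_v$ hanging off the cycle, resolution follows essentially from the tree argument behind Proposition~\ref{Prop_Kelm}, using that all leaves are in $S$. The substantive case is when $x,x'$ involve the cycle itself: here the geodesic triple of $S$-active vertices $u,v,w$ guarantees that the three arcs they determine, together with shortest paths pushed into the trees $T_u,T_v,T_w$ down to leaves in $S$, produce enough $S$-closed paths to enclose any pair lying on $C$, since a geodesic triple means the three pairwise shortest paths together cover all of $C$.

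The main obstacle I expect is the necessity of the geodesic-triple condition: converting the combinatorial hypothesis ``no geodesic triple of $S$-active vertices'' into a concrete unresolved pair of elements. The delicate point is that $S$-active vertices can be anywhere, and one must argue that their failure to form a geodesic triple confines them to a region small enough that a symmetric pair on the cycle is left unresolved; getting the parity right (whether $|V(C)|$ is even or odd, and whether the unresolved pair is vertex-vertex, vertex-edge, or edge-edge) will require care. A secondary difficulty in the sufficiency direction is handling pairs where one element is on the cycle and the other is deep inside a tree, ensuring the enclosing $S$-closed path genuinely contains both; but this should reduce to combining the tree reasoning with the cycle-covering property of the geodesic triple.
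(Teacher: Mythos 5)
There is a genuine gap in your sufficiency direction: the key claim that a geodesic triple of $S$-active vertices produces ``enough $S$-closed paths to enclose any pair lying on $C$'' is false. An $S$-closed path is a single shortest path between two vertices of $S$, so it runs along only one of the three arcs determined by the triple $u,v,w$; consequently, if $x$ lies in the interior of the arc between $u$ and $v$ and $x'$ lies in the interior of the arc between $v$ and $w$, then no $S$-closed path contains both --- covering $C$ by the three paths is not the same as enclosing every pair. Such pairs are distinguished but not enclosed, so the reduction to Observation \ref{Obs_enclosure} collapses precisely in what you call the substantive case. The paper avoids this by comparing distances directly: two $S$-active vertices fail to distinguish a pair on $C$ only if they are antipodal, in which case a third vertex of the geodesic triple distinguishes the pair, and then the distinguishing cycle vertex is replaced by an actual vertex of $S$ inside its hanging tree.

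Second, the necessity of the geodesic-triple condition --- which you yourself identify as the main obstacle --- is left as a plan rather than a proof: you never exhibit an unresolved pair. The paper's construction is short and you would need it, or your confinement idea made rigorous: one treats $a_{S}(C)=0$ (then $S=\emptyset$), $a_{S}(C)=1$ (the two cycle-neighbors of the unique $S$-active vertex are unresolved), and $a_{S}(C)\geq 2$, where one takes $S$-active vertices $u,v$ at maximum distance, uses the absence of a geodesic triple to find a neighbor $w$ of $v$ on $C$ with $d(u,v)\leq d(u,w)$, and checks that the vertex $v$ and the edge $vw$ are not distinguished by $S$. (Your half-cycle confinement claim is in fact provable and leads to the same unresolved pair, but as written it is only an intention.) Finally, a small slip in your leaf argument: from $d(s,\ell u)=d(s,u)$ the unresolved pair is the edge $\ell u$ and its non-leaf endpoint $u$ --- exactly the paper's pair --- not the pair $\ell$ and $\ell u$ that you name, which every $s\neq\ell$ distinguishes, since $d(s,\ell)=d(s,\ell u)+1$.
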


\begin{proof}
Assume $S\subseteq V(G)$ is a mixed metric generator. We want to prove that
$S$ contains all leaves and that there is a geodesic triple of $S$-active
vertices on $C$. If this does not hold, then we have the following two possibilities.

First, if there is a vertex $v\in V(G)$ such that $\deg(v)=1$ and
$v\not \in S$, then let $e$ be the only edge from $E(G)$ incident to $v$ and
let $w$ be the other end-vertex of $e$. Then $w$ and $e$ are not distinguished
by $S$ which is a contradiction.

Second, if there is no geodesic triple of $S$-active vertices on $C$, then we
consider several cases with respect to the value of $a_{S}(C)$. If
$a_{S}(C)=0$, then $S=\phi$, so obviously $S$ cannot be a mixed metric
generator which is a contradiction. If $a_{S}(C)=1$, let $u$ be the only
$S$-active vertex on $C$ and let $v$ and $w$ be two neighbors of $u$ on $C.$
But then $v$ and $w$ are not distinguished by $S$. Finally, if $a_{S}(C)\geq2$
then let $u$ and $v$ be the two $S$-active vertices on $C$ on the greatest
possible distance. As there is no geodesic triple of $S$-active vertices on
$C,$ there must exist a neighbor $w$ of $v$ on the cycle $C$ such that
$d(u,v)\leq d(u,w)$. Notice that in this case $v$ and $vw$ are not
distinguished by $S,$ which is again a contradiction with $S$ being a mixed
metric generator.

Now we prove the other direction. So, assume that a set $S\subseteq V(G)$
contains all leaves and there is a geodesic triple of $S$-active vertices on
$C$. We want to prove $S$ is a mixed metric generator. Let $x$ and $x^{\prime
}$ be two elements from the set $V(G)\cup E(G).$ If $x$ and $x^{\prime}$ are
enclosed by $S$, Observation \ref{Obs_enclosure} implies they are
distinguished by $S$ and the proof is over. Assume therefore that $x$ and
$x^{\prime}$ are not enclosed by $S$. We distinguish the following two cases.

\medskip\noindent\textbf{Case 1:} $x$\emph{ or }$x^{\prime}$\emph{ does not
belong to the cycle }$C$\emph{. }Without loss of generality we may assume $x$
does not belong to $C.$ Let $v$ be the vertex on $C$ such that $x$ belongs to
$T_{v}.$ Notice that $x^{\prime}$ can neither belong to $T_{v}$ nor be an edge
incident to $v$ on $C,$ as in that case the existence of a geodesic triple of
$S$-active vertices on $C$ would imply $x$ and $x^{\prime}$ are enclosed by
$S$. Therefore, $x$ and $x^{\prime}$ are not incident to each other. Note that
there is a leaf $s$ in $T_{v}$ such that the shortest path from $s$ to
$x^{\prime}$ contains $x.$ Since $s$ as a leaf belongs to $S,$ this implies
$x$ and $x^{\prime}$ are half-enclosed by $S$ and then the claim follows from
Observation \ref{Obs_enclosure}.

\medskip\noindent\textbf{Case 2:} \emph{both }$x$\emph{ and }$x^{\prime}%
$\emph{ belong to the cycle }$C$\emph{.} Notice that $x$ and $x^{\prime}$
cannot be a vertex and an edge incident to each other, since in that case they
would certainly be enclosed by $S.$ Let $u$ and $v$ be two $S$-active vertices
on $C.$ The pair $x$ and $x^{\prime}$ is not distinguished by $u$ and $v$ only
if $u$ and $v$ are a pair of antipodal vertices. But then the pair $x$ and
$x^{\prime}$ is certainly distinguished by a third $S$-active vertex $w$ on
$C,$ which must exist as there is a geodesic triple of $S$-active vertices on
$C$. If $w$ is not contained in $S,$ the fact that $w$ distinguishes $x$ and
$x^{\prime}$ implies that a vertex $s\in S\cap T_{w}$ distinguishes them also,
and such $s$ must exist as $w$ is $S$-active. Therefore, the pair $x$ and
$x^{\prime}$ is distinguished by $S$ which concludes the proof.
\end{proof}

Recall that $\mathrm{rt}(C)$ denotes the number of root vertices on the cycle
$C$ in a unicyclic graph $G$. The above lemma now enables us to prove the
following theorem which gives the exact value of the mixed metric dimension in
a unicyclic graph.

\begin{theorem}
Let $G$ be a unicyclic graph with $C$ being its only cycle. Then
\[
\mathrm{mdim}(G)=L_{1}(G)+\max\{3-\mathrm{rt}(C),0\}+\Delta,
\]
where $\Delta=1$ if $\mathrm{rt}(C)\geq3$ and there is no geodesic triple of
root vertices on $C$, while $\Delta=0$ otherwise.
\end{theorem}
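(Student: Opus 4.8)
The plan is to use Lemma~\ref{Lemma_mixed} as the sole combinatorial engine: a set $S$ is a mixed metric generator precisely when it contains every leaf and admits a geodesic triple of $S$-active vertices on $C$. Since the constraints ``contain all leaves'' and ``create a geodesic triple of $S$-active vertices'' are essentially independent (a leaf of $G$ can lie in some $T_v$ and thereby make $v$ active, but placing a vertex of $S$ inside a nontrivial $T_v$ makes $v$ active for free), the strategy is to first argue that any mixed metric generator must contain all $L_1(G)$ leaves, contributing the baseline term $L_1(G)$, and then ask how many \emph{additional} vertices beyond the forced leaves are needed to guarantee the geodesic-triple condition. The whole proof thus reduces to a careful counting of this surplus, which should exactly equal $\max\{3-\mathrm{rt}(C),0\}+\Delta$.

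First I would establish the lower bound $\mathrm{mdim}(G)\geq L_1(G)$, noting that the $L_1(G)$ leaves are mandatory by the lemma. The heart of the argument is then to count how many extra vertices are needed so that, together with the activations already induced by the leaves, a geodesic triple of active vertices exists. The key observation is that a root vertex $v$ becomes $S$-active automatically as soon as any vertex of $S$ (in particular any leaf, if $T_v$ contains a leaf) lies in $T_v$; so the root vertices are the ``cheap'' candidates for activation. I would split into cases by the value of $\mathrm{rt}(C)$. If $\mathrm{rt}(C)\leq 2$, fewer than three root vertices exist, so at least $3-\mathrm{rt}(C)$ of the active vertices in the triple must be \emph{non-root} vertices, each of which must be placed into $S$ explicitly at a cost of one vertex apiece (a non-root vertex has trivial $T_v$, so the only way to make it active is to put it directly in $S$); moreover one verifies that three active vertices placed on $C$ can always be chosen to form a geodesic triple, so no extra vertex beyond these is needed, giving the term $\max\{3-\mathrm{rt}(C),0\}$ with $\Delta=0$.

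For the remaining case $\mathrm{rt}(C)\geq 3$, all three active vertices of the desired triple can be taken to be root vertices, which are activated for free by the leaves forced into $S$; here $\max\{3-\mathrm{rt}(C),0\}=0$. The subtlety is whether these automatically-activated root vertices can be \emph{arranged} into a geodesic triple. If the root vertices already contain a geodesic triple, no extra vertex is needed and $\Delta=0$. But if no three of the root vertices form a geodesic triple (the $\Delta=1$ case), then one must spend exactly one additional vertex: I would argue that adding a single well-chosen vertex on $C$ (a non-root vertex placed so as to complete a geodesic triple with two existing active root vertices) always suffices, and that one extra vertex is genuinely necessary since the existing active set admits no triple. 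This matching upper and lower bound analysis yields the $+\Delta$ term.

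The main obstacle will be the case $\mathrm{rt}(C)\geq 3$ with no geodesic triple among root vertices: I must show both that a single extra vertex always restores the geodesic-triple condition and that one cannot do with zero. The existence half requires a small geometric argument on the cycle---showing that given two active root vertices one can always position a third active vertex so the three distances sum to $|V(C)|$---and the necessity half requires verifying that the obstruction genuinely forces a new vertex rather than being circumventable by a clever reallocation of the already-placed leaves. Establishing that three active vertices on a cycle can always be chosen as a geodesic triple (equivalently, understanding exactly when they cannot) is the pivotal geometric fact underlying both the $\max\{3-\mathrm{rt}(C),0\}$ term and the $\Delta$ correction, and getting its boundary cases right is where I expect the real work to lie.
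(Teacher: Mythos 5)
Your proposal is correct and takes essentially the same route as the paper: both use Lemma~\ref{Lemma_mixed} as the sole engine, build the generator from all leaves plus $\max\{3-\mathrm{rt}(C),0\}+\Delta$ extra cycle vertices, and rest on the same two facts (root vertices are activated for free by the forced leaves, while a non-root vertex can only become active by being placed in $S$ itself, and any two vertices on a cycle can be completed to a geodesic triple). If anything, you spell out the lower-bound (necessity) half more explicitly than the paper, which leaves it implicit in the characterization lemma.
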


\begin{proof}
Let $S$ be the set of all leaves in $G.$ Note that the number of $S$-active
vertices on $C$ equals the number of root vertices, i.e. $a_{S}(C)=\mathrm{rt}%
(C).$ Now consider the following: if $\mathrm{rt}(C)<3$ then $\max
\{3-\mathrm{rt}(C),0\}>0$ vertices can certainly be added to $S$ obtaining
thus the set $S^{\prime},$ so that there is a geodesic triple of $S^{\prime}%
$-active vertices. Applying Lemma \ref{Lemma_mixed} to the set $S^{\prime}$
yields the desired result.

On the other hand, if $\mathrm{rt}(C)\geq3$ then there may or may not be a
geodesic triple of root vertices on $C.$ If there is a geodesic triple of root
vertices on $C,$ then the claim follows from applying Lemma \ref{Lemma_mixed}
to the set $S$. Yet, if $\mathrm{rt}(C)\geq3$ and there is no geodesic triple
of root vertices on $C,$ then $S$ is not a mixed metric generator, but then a
vertex from $C$ can certainly be added to $S$ so that it forms a geodesic
triple with any two root vertices on $C.$ Let $S^{\prime}$ be the set obtained
from $S$ by adding such a vertex to it, then The Lemma \ref{Lemma_mixed}
implies $S^{\prime}$ is a mixed metric generator, which concludes the proof.
\end{proof}

This theorem immediately yields the following simple upper bound on the mixed
metric dimension of a unicyclic graph.

\begin{corollary}
\label{Cor_UnicyclicBound}Let $G\not =C_{n}$ be a unicyclic graph. Then
$\mathrm{mdim}(G)\leq L_{1}(G)+2$ and the equality is attained precisely when
there is only one root vertex on the cycle of $G.$
\end{corollary}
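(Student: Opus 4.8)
The plan is to derive the corollary directly from the theorem by analyzing the term $\max\{3-\mathrm{rt}(C),0\}+\Delta$ under the hypothesis that $G$ is unicyclic but not a cycle. Since $G\neq C_n$, the cycle $C$ carries at least one tree hanging off it, so $\mathrm{rt}(C)\geq 1$; I will split the argument according to whether $\mathrm{rt}(C)$ equals $1$, $2$, or is at least $3$.

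First I would handle the case $\mathrm{rt}(C)=1$. Here $\max\{3-\mathrm{rt}(C),0\}=2$, and since $\mathrm{rt}(C)<3$ the theorem gives $\Delta=0$, so $\mathrm{mdim}(G)=L_1(G)+2$. This is exactly the equality case claimed. Next, for $\mathrm{rt}(C)=2$, we get $\max\{3-\mathrm{rt}(C),0\}=1$ and again $\Delta=0$, yielding $\mathrm{mdim}(G)=L_1(G)+1<L_1(G)+2$. Finally, when $\mathrm{rt}(C)\geq 3$, the term $\max\{3-\mathrm{rt}(C),0\}=0$, and since $\Delta\in\{0,1\}$ we obtain $\mathrm{mdim}(G)\leq L_1(G)+1<L_1(G)+2$. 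In every case the bound $\mathrm{mdim}(G)\leq L_1(G)+2$ holds.

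To establish the equality characterization, I would observe that the above case analysis shows $\mathrm{mdim}(G)=L_1(G)+2$ forces $\max\{3-\mathrm{rt}(C),0\}+\Delta=2$. Since $\Delta\leq 1$ and $\Delta=1$ can only occur when $\mathrm{rt}(C)\geq 3$ (in which case $\max\{3-\mathrm{rt}(C),0\}=0$, giving a total of at most $1$), the value $2$ can only be reached through $\max\{3-\mathrm{rt}(C),0\}=2$ with $\Delta=0$, i.e. precisely when $\mathrm{rt}(C)=1$. Conversely, $\mathrm{rt}(C)=1$ was already shown to give equality. This pins down the equality case to exactly one root vertex.

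The main subtlety, rather than a genuine obstacle, is bookkeeping the two summands together: one must verify that $\Delta$ can never compensate to push the total back up to $2$ in the regime $\mathrm{rt}(C)\geq 3$, which is immediate since $\Delta\leq 1$ there while the max term vanishes. The hypothesis $G\neq C_n$ is used only to guarantee $\mathrm{rt}(C)\geq 1$, ruling out the empty-tree situation; no further structural input is needed, so the proof is essentially a clean arithmetic reading of the theorem's formula.
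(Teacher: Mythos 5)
Your proposal is correct and matches the paper's intent exactly: the paper gives no explicit proof, stating only that the corollary follows immediately from the theorem, and your case analysis on $\mathrm{rt}(C)\in\{1,2\}$ versus $\mathrm{rt}(C)\geq 3$ (noting that $\Delta=1$ forces the max term to vanish, so the total never reaches $2$ except when $\mathrm{rt}(C)=1$) is precisely the arithmetic the authors leave to the reader. Your observation that $G\neq C_n$ is used only to ensure $\mathrm{rt}(C)\geq 1$ is also the right reading of the hypothesis.
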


\section{Mixed dimension of graphs with edge disjoint cycles}

We wish to extend the results for unicyclic graphs from previous section to
graphs with more than one cycle. The extension can be done quite naturally for
graphs in which cycles are edge disjoint, i.e. for cactus graphs. For every
cycle $C_{i}$ in a cactus graph $G$ there is a unicyclic subgraph $G_{i}$ of
$G$ in which $C_{i}$ is the only cycle. Moreover, such subgraphs cover the
whole cactus graph $G$. Applying the results from the previous section to
those subgraphs will yield the result. But let us introduce all the necessary
definitions more formally.

A standard notion is that for any path $P$ in a graph $G$ connecting vertices
$u$ and $v$ we say that $u$ and $v$ are its \emph{end-vertices}, while all
other vertices in $P$ are called \emph{internal} vertices. Let $G$ be a cactus
graph, let $C_{i}$ be a cycle in $G$ and let $u$ be a vertex in $G.$ We say
that a vertex $u$ \emph{gravitates} to the cycle $C_{i}$ if it belongs to
$C_{i}$ or the shortest path from $u$ to $C_{i}$ does not share its internal
vertices with any cycle in $G$ (see Figure \ref{Figure20}). A \emph{unicyclic
region} of a cycle $C_{i}$ is the subgraph $G_{i}$ of $G$ induced by all
vertices that gravitate to $C_{i}.$ Notice that unicyclic regions of two
distinct cycles $C_{i}$ and $C_{j}$ are not necessarily vertex disjoint, but
unicyclic regions of all cycles in $G$ do cover the whole $G.$ We say that a
vertex $v\in V(G_{i})$ is a \emph{boundary vertex} of a unicyclic region
$G_{i}$ if $v\in V(C_{j})$ for $j\not =i$ (see again the Figure \ref{Figure20}%
).\begin{figure}[h]
\begin{center}
\includegraphics[scale=1.0]{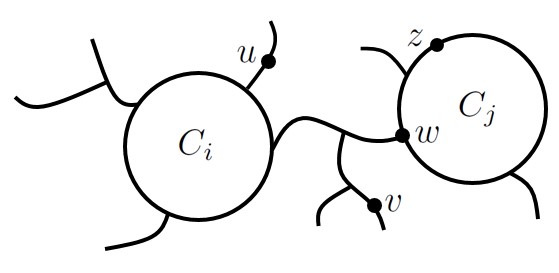}
\end{center}
\par
\caption{An illustration of a vertex gravitation: $u$ gravitates only to
$C_{i},$ $v$ gravitates to both $C_{i}$ and $C_{j},$ as does the vertex $w$
which is also a boundary vertex of $G_{i},$ and finally $z$ gravitates only to
$C_{j}$.}%
\label{Figure20}%
\end{figure}

As we will apply the results from the previous section to unicyclic regions of
cycles in a cactus graph $G$, for a set of vertices $S\subseteq V(G)$ we need
to introduce a corresponding set $S_{i}\subseteq V(G_{i}),$ so that if $S$ is
a mixed metric generator in $G$ then $S_{i}$ also is one in $G_{i}$. Note that
we cannot simply take $S_{i}=S\cap V(G_{i})$ since a pair of edges/vertices
from $G_{i}$ may in the graph $G$ be distinguished by a vertex $s\in S$ which
is outside of $G_{i}$. Therefore, for a set $S\subseteq V(G)$ we define a set
$S_{i}\subseteq V(G_{i})$ as the set obtained from $S\cap V(G_{i})$ by adding
all boundary vertices from $G_{i}$ to it. Note that the following holds: if
$S$ contains all leaves from $G$ then $S_{i}$ contains all leaves from $G_{i}%
$, also if there is a geodesic triple of $S$-active vertices on every cycle in
$G$ then there is a geodesic triple of $S_{i}$-active vertices on $C_{i}$ in
$G_{i}$.

\begin{lemma}
Let $G$ be a cactus graph with $c$ cycles $C_{1},\ldots,C_{c}$. Then
\[
\mathrm{mdim}(G)\leq L_{1}(G)+\sum_{i=1}^{c}\max\{3-\mathrm{rt}(C_{i}%
),0\}+\Delta,
\]
where $\Delta$ is the number of cycles $C_{i}$ for which $\mathrm{rt}%
(C_{i})\geq3$ and there is no geodesic triple of root vertices on the cycle
$C_{i}$.
\end{lemma}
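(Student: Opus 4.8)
The plan is to construct an explicit mixed metric generator $S$ in $G$ whose cardinality matches the claimed upper bound, and then invoke Lemma~\ref{Lemma_mixed} applied to each unicyclic region $G_i$ to verify that $S$ is indeed a mixed metric generator. First I would let $S_0$ be the set of all leaves of $G$, and then augment it cycle by cycle: for each cycle $C_i$ with $\mathrm{rt}(C_i)<3$, I add $\max\{3-\mathrm{rt}(C_i),0\}$ suitably chosen non-root vertices of $C_i$ so that together with the existing root vertices they yield three $S$-active vertices forming a geodesic triple on $C_i$; for each cycle $C_i$ with $\mathrm{rt}(C_i)\geq 3$ but with no geodesic triple of root vertices, I add a single vertex on $C_i$ that completes a geodesic triple with two of the root vertices. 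The total number of added vertices is exactly $\sum_{i=1}^c \max\{3-\mathrm{rt}(C_i),0\}+\Delta$, so $|S|$ equals the right-hand side of the bound.

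Having built $S$, the next step is to verify it is a mixed metric generator for $G$. Here I would use the reduction machinery set up in the paragraph preceding the statement: for each cycle $C_i$ I pass to the associated set $S_i\subseteq V(G_i)$, obtained from $S\cap V(G_i)$ by adjoining all boundary vertices of $G_i$. By the remark already established in the text, since $S$ contains all leaves of $G$, each $S_i$ contains all leaves of $G_i$; and since $S$ produces a geodesic triple of $S$-active vertices on every cycle, each $S_i$ produces a geodesic triple of $S_i$-active vertices on $C_i$ in $G_i$. Lemma~\ref{Lemma_mixed} then guarantees that each $S_i$ is a mixed metric generator in the unicyclic graph $G_i$.

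The crux of the argument, and the step I expect to be the main obstacle, is transferring the ``local'' resolving property on each $G_i$ back to a ``global'' resolving property on $G$. Given an arbitrary pair $x,x'$ of elements of $V(G)\cup E(G)$, I must exhibit a vertex of $S$ that distinguishes them. The natural strategy is to locate a single unicyclic region $G_i$ that contains both $x$ and $x'$; then $S_i$ distinguishes them inside $G_i$, and I would argue that the distinguishing vertex (or, if it is an added boundary vertex not lying in $S$ itself, a genuine element of $S$ reached by passing into the corresponding tree or adjacent region) still distinguishes $x$ and $x'$ in $G$. The delicate case is when $x$ and $x'$ straddle the boundary between two regions or lie in different regions entirely; here I would use the tree-like block structure of a cactus, namely that every boundary vertex is a cut vertex, so that distances from a far-away element of $S$ to $x$ and $x'$ factor through the relevant boundary vertex and the problem reduces to the behavior already resolved within a single region.

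Since the statement is an inequality (an upper bound) rather than an exact value, it suffices to produce one generator of the stated size, so I would not need to argue any matching lower bound. The proof therefore reduces to the explicit construction together with the verification above, and I would conclude by noting $\mathrm{mdim}(G)\leq |S|=L_1(G)+\sum_{i=1}^c\max\{3-\mathrm{rt}(C_i),0\}+\Delta$.
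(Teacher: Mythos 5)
Your construction of $S$ is exactly the paper's (all leaves, plus $\max\{3-\mathrm{rt}(C_i),0\}$ non-root vertices per cycle, plus one extra vertex per deficient cycle with $\mathrm{rt}(C_i)\geq 3$), and your treatment of pairs lying in a common unicyclic region matches the paper's first case, including the repair when the vertex of $S_i$ doing the distinguishing is a boundary vertex outside $S$. The gap is the cross-region case, which you dispose of by asserting that distances from far-away elements of $S$ ``factor through the relevant boundary vertex'' so that ``the problem reduces to the behavior already resolved within a single region.'' It does not. Factoring gives you only this: if every shortest path from $s$ to $x$ and to $x'$ passes through a common cut vertex $b$, then $d(s,x)-d(s,x')=d(b,x)-d(b,x')$, so such an $s$ distinguishes the pair if and only if the single vertex $b$ does. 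Thus at best you reduce to the question of whether certain individual cut vertices separate the pair --- and those vertices need not lie in $S$, nor is that question answered by Lemma~\ref{Lemma_mixed}, which only certifies that the \emph{set} $S_i$ distinguishes pairs both of which lie inside $G_i$. The natural attempt to import the lemma --- replace $x'$ by the gate $b$ of $G_i$ toward $x'$ and use that $S_i$ distinguishes $(x,b)$ --- fails because of the additive offset: for $s$ on the $x$-side one has $d(s,x')=d(s,b)+d(b,x')$, so $s$ distinguishes $(x,x')$ iff $d(s,x)\neq d(s,b)+d(b,x')$, and a vertex with $d(s,x)\neq d(s,b)$ may perfectly well satisfy $d(s,x)=d(s,b)+d(b,x')$.

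The sub-case your sketch cannot reach is precisely where the paper spends most of its second case: $x$ a non-root (degree-two) vertex of $C_i$, $x'$ a non-root vertex of $C_k$, and the pair not half-enclosed by $S$. Then no region contains both elements, every cut vertex between them may fail to distinguish them (e.g.\ under symmetric placement), and the vertices of $S$ that do the job are exactly those whose distances to $x$ and to $x'$ do \emph{not} factor through a common vertex. The paper's resolution is a two-sided comparison that genuinely uses the geodesic triples your construction provides: with $u\in C_i$ and $v\in C_k$ a closest pair of vertices of the two cycles, non-half-enclosure together with the geodesic triple on $C_i$ yields an $S$-active vertex $w\neq u$ on a shortest $x$--$u$ path, and symmetrically an $S$-active $z\neq v$ on a shortest $x'$--$v$ path; since $w,u,v,z$ all lie on a common shortest $x$--$x'$ path, if $s_w\in S\cap T_w$ fails, i.e.\ $d(x,w)=d(w,x')$, then $d(x,z)=d(x,w)+d(w,z)>d(z,x')$, and hence $s_z\in S\cap T_z$ succeeds. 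Some either/or argument of this kind between witnesses on the two sides is indispensable; without it your proof is incomplete exactly at the step you yourself identified as the crux.
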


\begin{proof}
We will prove the lemma by constructing a mixed metric generator $S$ of the
desired cardinality. For that purpose, let $S_{a}$ be the set of all leaves in
$G$ and note that $\left\vert S_{a}\right\vert =L_{1}(G).$ Let $S_{b}$ consist
of $\max\{3-\mathrm{rt}(C_{i}),0\}$ non-root vertices from every cycle $C_{i}%
$, which are chosen so that there is a geodesic triple of $(S_{a}\cup S_{b}%
)$-active vertices on the cycle $C_{i}$ if that is possible (it will be
possible for every cycle $C_{i}$ in which $\mathrm{rt}(C_{i})\leq2$, since for
any two vertices on a cycle it is possible to add a third one so that it forms
a geodesic triple with those two). Note that $\left\vert S_{b}\right\vert
=\sum_{i=1}^{c}\max\{3-\mathrm{rt}(C_{i}),0\}.$ Finally, consider all cycles
$C_{i}$ in $G$ with $\mathrm{rt}(C_{i})\geq3$ for which there is no geodesic
triple of $(S_{a}\cup S_{b})$-active vertices and let $S_{c}$ be composed of a
vertex from every such cycle chosen so that it forms a geodesic triple with
two $(S_{a}\cup S_{b})$-active vertices on that cycle. Note that $\left\vert
S_{c}\right\vert =\Delta.$ Now we define $S=S_{a}\cup S_{b}\cup S_{c}.$ Since
$S_{a},$ $S_{b}$ and $S_{c}$ are pairwise disjoint, we immediately obtain
\[
\left\vert S\right\vert =L_{1}(G)+\sum_{i=1}^{c}\max\{3-\mathrm{rt}%
(C_{i}),0\}+\Delta.
\]
It remains to prove that $S$ is a mixed metric generator in $G.$

Note that $S$ contains all leaves from $G$ and there is a geodesic triple of
$S$-active vertices on every cycle $C_{i}$ in $G.$ Let $x$ and $x^{\prime}$ be
a pair of elements from the set $V(G)\cup E(G).$ We distinguish the following
two cases.

\medskip\noindent\textbf{Case 1:} $x$\emph{ and }$x^{\prime}$\emph{ belong to
a same unicyclic region. }Assume $x$ and $x^{\prime}$ belong to the unicyclic
region $G_{i}$ of a cycle $C_{i}$. Note that the set $S_{i}$ contains all
leaves of the graph $G_{i}$ and there is a geodesic triple of $S_{i}$-active
vertices on the only cycle $C_{i}$ of $G_{i}.$ Therefore, according to Lemma
\ref{Lemma_mixed}, the set $S_{i}$ is a mixed metric generator in $G_{i}$
which means it distinguishes $x$ and $x^{\prime}.$ If $x$ and $x^{\prime}$ are
distinguished in $G_{i}$ by $s\in S_{i}\cap S,$ then they are distinguished by
the same $s$ in $G.$ If, on the other hand they are distinguished by $s\in
S\backslash S_{i},$ that implies $s$ is a boundary vertex of the region
$G_{i},$ but then the existance of a geodesic triple of $S$-active vertices on
every cycle in $G$ implies there is a vertex $s^{\prime}\in S$ outside $G_{i}$
such that
\[
d(x,s^{\prime})=d(x,s)+d(s,s^{\prime})\not =d(x^{\prime},s)+d(s,s^{\prime
})=d(x^{\prime},s^{\prime}),
\]
which means $s^{\prime}$ distinguishes $x$ and $x^{\prime}$ in $G.$ Therefore,
$x$ and $x^{\prime}$ are distinguished by $S$ and the case is proven.

\medskip\noindent\textbf{Case 2: }$x$\emph{ and }$x^{\prime}$\emph{ do not
belong to a same unicyclic region.} Assume $x$ belongs to the region $G_{i}$
and $x^{\prime}$ belongs to the region $G_{k},$ where $i\not =k$. We may
assume that $x^{\prime}$ does not belong to $G_{i}$ nor $x$ to $G_{k},$
otherwise this case would reduce to the previous case.

Let $v_{j}\in V(C_{j}),$ where $j\not =i$, be the boundary vertex of the
region $G_{i}$ closest to $x^{\prime}$. If $x$ and $x^{\prime}$ are incident
to each other, then $k=j$ and both $x$ and $x^{\prime}$ are incident or equal
to $v_{j}.$ Since both $C_{i}$ and $C_{j}$ contain a geodesic triple of
$S$-active vertices it immediately follows that $x$ and $x^{\prime}$ are
enclosed by $S$ and are therefore distinguished by $S$ according to
Observation \ref{Obs_enclosure}. Assume, therefore, that $x$ and $x^{\prime}$
are not incident to each other. If $x$ and $x^{\prime}$ are half-enclosed by
$S,$ then the fact they are not incident to each other together with
Observation \ref{Obs_enclosure} would further imply that they are
distinguished by $S.$ So, let us assume that $x$ and $x^{\prime}$ are neither
incident to each other nor half-enclosed by $S$.

Let $v$ be a vertex from the cycle $C_{i}$ such that $T_{v}$ is non-trivial.
If $x$ belongs to $T_{v}\cap G_{i}$ then the pair $x$ and $x^{\prime}$ is
certainly half-enclosed by a vertex $s\in S$ contained in $T_{v}$. The similar
argument holds when $x^{\prime}$ belongs to $T_{w}\cap G_{k}$ where $T_{w}$ is
a non-trivial connected component of a vertex $w$ from $C_{k}.$ As we assumed
that $x$ and $x^{\prime}$ are not half-enclosed by $S$, we can conclude that
$x$ belongs to $C_{i}$, $x^{\prime}$ belongs to $C_{j}$ and neither of them is
a root vertex on the corresponding cycle, i.e. if they are of degree $2$.

Now, let $u$ and $v$ be the vertices from the cycle $C_{i}$ and $C_{k}$
respectively, such that the distance between $u$ and $v$ is the smallest
possible. The fact that there is no vertex $s\in S$ half-enclosing the pair
$x$ and $x^{\prime},$ together with the fact that $C_{i}$ contains a geodesic
triple of $S$-active vertices, implies that a shortest path from $x$ to $u$
must contain a vertex distinct from $u$ which is $S$-active, denote it by $w$
(see Figure \ref{Figure22}). For the similar reason a shortest path from
$x^{\prime}$ to $v$ must contain a $S$-active vertex $z$ distinct from $v$.
Note that there is a shortest path from $x$ to $x^{\prime}$ that contains all
of the vertices $u,$ $v$, $w$ and $z.$ Let $s_{w}$ and $s_{z}$ be vertices
from $S$ contained in $T_{v}$ and $T_{z}$ respectively. Note the following: if
$x$ and $x^{\prime}$ are not distinguished by $s_{w}$ then
$d(x,w)=d(w,x^{\prime})$, but then $d(x,z)>d(z,x^{\prime})$ which further
implies $d(x,s_{z})>d(s_{z},x^{\prime}).$ Therefore, $x$ and $x^{\prime}$ are
distinguished by $S,$ so $S$ is a mixed metric generator. \begin{figure}[h]
\begin{center}
\includegraphics[scale=1.0]{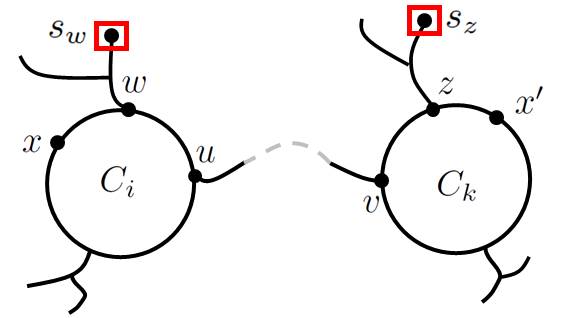}
\end{center}
\par
\caption{An illustration of a pair $x$ and $x^{\prime}$ which is neither
incident to each other nor half-enclosed by $S$: the shortest path from $x$ to
$u$ contains $S$-active vertex $w$, the shortest path from $x^{\prime}$ to $v$
contains $S$-active vertex $z$. Consequently, the pair $x$ and $x^{\prime}$ is
distinguished either by $s_{w}$ or $s_{z}$. }%
\label{Figure22}%
\end{figure}
\end{proof}

Now we show that the opposite inequality holds.

\begin{lemma}
Let $G$ be a cactus graph with $c$ cycles $C_{1},\ldots,C_{c}$. Then
\[
\mathrm{mdim}(G)\geq L_{1}(G)+\sum_{i=1}^{c}\max\{3-\mathrm{rt}(C_{i}%
),0\}+\Delta,
\]
where $\Delta$ is the number of cycles $C_{i}$ in $G$ for which $\mathrm{rt}%
(C_{i})\geq3$ and there is no geodesic triple of root vertices on the cycle
$C_{i}$.
\end{lemma}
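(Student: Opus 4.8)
The plan is to prove that any mixed metric generator $S$ in $G$ must be large enough by establishing a lower bound on $\left\vert S\right\vert$ via the characterization that necessary conditions force on $S$ locally at each cycle. The natural strategy is to show that \emph{any} mixed metric generator $S$ in the whole cactus $G$ must (i) contain all leaves of $G$, and (ii) induce, on each cycle $C_i$, a configuration of $S$-active vertices that contains a geodesic triple. These are exactly the conditions that the previous (upper bound) lemma's construction satisfied, and they mirror the necessity direction of Lemma \ref{Lemma_mixed} for the unicyclic case. First I would argue that $S$ must contain every leaf of $G$: if a leaf $v$ with incident edge $e=vw$ is omitted, then $w$ and $e$ are indistinguishable by any vertex, exactly as in the first paragraph of the proof of Lemma \ref{Lemma_mixed}; this single argument is global and does not depend on the cactus structure.

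Next I would establish that on each individual cycle $C_i$ there must be a geodesic triple of $S$-active vertices. The key point is that the local argument from Lemma \ref{Lemma_mixed} transfers: if $C_i$ has no geodesic triple of $S$-active vertices, then reproducing the case analysis on $a_S(C_i)$ (values $0$, $1$, and $\geq 2$) produces a pair of elements on $C_i$ that cannot be distinguished \emph{by any vertex of $G$}, not merely by vertices inside $G_i$. The subtlety here, and what I expect to be the main obstacle, is verifying that a vertex $s\in S$ lying \emph{outside} the unicyclic region $G_i$ cannot rescue a pair $v$, $vw$ on $C_i$ that the interior vertices fail to distinguish. This requires showing that any shortest path from such an external $s$ to an element of $C_i$ enters $C_i$ through a single boundary/root vertex, so that external vertices ``see'' $C_i$ from one direction only and hence behave like a single $S$-active vertex concentrated at that entry point; thus they cannot create the antipodal-plus-third-vertex configuration that a geodesic triple provides. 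This is essentially the converse of the enclosure reasoning used in Case 2 of the preceding upper-bound lemma.

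Once these two structural necessities are in place, the counting is the delicate final step. I would argue that the leaves contribute $L_1(G)$ to $\left\vert S\right\vert$, and that on each cycle $C_i$ the requirement of a geodesic triple of $S$-active vertices forces at least $\max\{3-\mathrm{rt}(C_i),0\}$ vertices of $S$ \emph{beyond the leaves} to be devoted to making $C_i$ have enough active vertices, since root vertices are automatically $S$-active (being joined to leaves) but non-root active vertices must be paid for. The $\Delta$ term accounts for cycles with $\mathrm{rt}(C_i)\geq 3$ but no geodesic triple among the root vertices: there, even though three active vertices exist, an extra vertex is needed to form a triple, contributing one more. To make the counting rigorous I would assign to each ``extra'' active vertex a distinct element of $S$ lying in its tree $T_v$, exploiting that the trees $T_v$ hanging off distinct cycles, and the leaf-sets feeding distinct active vertices, are disjoint enough that no single vertex of $S$ is charged twice across different cycles; the edge-disjointness of cycles in the cactus is what guarantees this non-overcounting.

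The main obstacle, to restate it, is the bookkeeping that prevents double-counting: a vertex $s\in S$ placed on or near a shared boundary vertex could in principle serve as an active-vertex witness for two different cycles simultaneously, and I must show the lower bound still holds by charging the mandated surplus on each cycle to pairwise-disjoint subsets of $S$. I expect to handle this by rooting the charging at the trees $T_v$ and the leaf structure, which are genuinely disjoint because the cycles share no edges, and by treating the boundary vertices carefully so that a vertex counted as a ``free'' root-induced active vertex for one cycle is never also counted as a ``paid-for'' surplus vertex for another.
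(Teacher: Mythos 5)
Your proposal is correct and follows essentially the same route as the paper: the paper argues contrapositively that any set $S$ smaller than the bound either misses a leaf or leaves some cycle $C_i$ without a geodesic triple of $S$-active vertices, and then exhibits an undistinguished pair exactly as in your case analysis on $a_S(C_i)$ (your ``single entry point'' concern is precisely what the notion of $S$-active vertex encodes, since every $s\in S$ satisfies $d(x,s)=d(x,z)+d(z,s)$ for the unique $z\in C_i$ with $s\in T_z$, so an external $s$ distinguishes elements of $C_i$ only if $z$ does). One minor repair: your parenthetical claim that root vertices are automatically $S$-active ``being joined to leaves'' is false in a general cactus (a root's tree $T_v$ may contain only cycles and no leaves), but your counting never needs it --- it suffices that each non-root active vertex $v$ has $T_v=\{v\}$, hence itself lies in $S$, lies on exactly one cycle, and is not a leaf, which is exactly the disjointness your charging scheme requires.
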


\begin{proof}
Let $S$ be a set of vertices in $G$ such that
\[
\left\vert S\right\vert <L_{1}(G)+\sum_{i=1}^{c}\max\{3-\mathrm{rt}%
(C_{i}),0\}+\Delta.
\]
Then at least one of the following holds: 1) there is a leaf $v$ in $G$ not
contained in $S$, 2) there is a cycle $C_{i}$ in $G$ such that less than $3$
vertices from $C_{i}$ are $S$-active, 3) there is a cycle $C_{i}$ in $G$ such
that there is no geodesic triple of $S$-active vertices on $C_{i}.$ Note that
the second and the third case reduce to the existence of a cycle $C_{i}$
without geodesic triple of $S$-active vertices.

It is sufficient to prove that $S$ is not a mixed metric generator. In the
first case, let $e$ be the edge incident to a leaf $v$ not contained in $S$
and let $w$ be the end-vertex of the edge $e$ distinct from $v.$ Then $e$ and
$w$ are not distinguished by $S,$ so $S$ is not a mixed metric generator. In
both second and third case, let $C_{i}$ be a cycle without a geodesic triple
of $S$-active vertices. If there are no $S$-active vertices on $C,$ then
$S=\phi$, so $S$ cannot be a mixed metric generator. If there is only one
$S$-active vertex $u$ on $C_{i},$ then neighbors $v$ and $w$ of $u$ on $C_{i}$
are not distinguished by $S.$ Finally, if there are at least two $S$-active
vertices on $C_{i},$ let $u$ and $v$ be two $S$-active vertices on $C_{i}$
such that $d(u,v)$ is maximum possible and let $w$ be the neighbor of $v$ on
$C_{i}$ such that $d(u,v)\leq d(u,w).$ Notice that such a $w$ must exist, as
there is no geodesic triple of $S$-active vertices on $C$. Then $v$ and $vw$
are not distinguished by $S,$ so $S$ is not a mixed metric generator.
\end{proof}

The previous two lemmas immediately yield the following result.

\begin{theorem}
\label{Tm_cactus}Let $G$ be a cactus graph with $c$ cycles $C_{1},\ldots
,C_{c}$. Then
\[
\mathrm{mdim}(G)=L_{1}(G)+\sum_{i=1}^{c}\max\{3-\mathrm{rt}(C_{i}%
),0\}+\Delta,
\]
where $\Delta$ is the number of cycles $C_{i}$ in $G$ for which $\mathrm{rt}%
(C_{i})\geq3$ and there is not a geodesic triple of root vertices on the cycle
$C_{i}$.
\end{theorem}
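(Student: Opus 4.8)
The plan is to obtain the equality by sandwiching $\mathrm{mdim}(G)$ between the matching upper and lower bounds supplied by the two lemmas immediately preceding. Writing
\[
N=L_{1}(G)+\sum_{i=1}^{c}\max\{3-\mathrm{rt}(C_{i}),0\}+\Delta
\]
for the common value of the two bounds, I would first invoke the upper-bound lemma to conclude $\mathrm{mdim}(G)\le N$ (it exhibits an explicit mixed metric generator $S=S_{a}\cup S_{b}\cup S_{c}$ of cardinality exactly $N$, with $S_{a}$ the leaves, $S_{b}$ the added non-root vertices guaranteeing a geodesic triple of active vertices on every cycle with at most two roots, and $S_{c}$ one extra vertex per cycle with at least three roots but no geodesic triple among them). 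I would then invoke the lower-bound lemma to conclude $\mathrm{mdim}(G)\ge N$ (it shows that any vertex set of size strictly below $N$ must either omit a leaf or fail to carry a geodesic triple of active vertices on some cycle, and in each case produces an indistinguishable pair). The two inequalities together force $\mathrm{mdim}(G)=N$, which is exactly the asserted formula.

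The one point genuinely worth checking is that the expression $N$ is literally the same in both lemmas: in particular that $\Delta$ is defined identically (the number of cycles $C_{i}$ with $\mathrm{rt}(C_{i})\ge 3$ carrying no geodesic triple of root vertices) and that the summation $\sum_{i=1}^{c}\max\{3-\mathrm{rt}(C_{i}),0\}$ matches term for term, so that the bounds coincide rather than merely having the same shape. Once this agreement is confirmed, no further argument is needed and the theorem follows by transitivity of $\le$ and $\ge$.

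In this instance I do not expect any real obstacle at the level of the theorem itself, since all of the combinatorial content has already been discharged inside the two lemmas — the constructive side on one hand and the obstruction (indistinguishable pair) argument on the other. The only conceivable subtlety is bookkeeping: verifying that the three disjoint pieces $S_{a}$, $S_{b}$, $S_{c}$ of the constructed generator sum to precisely $N$, and that the lower-bound case analysis exhausts every set of cardinality below $N$. Both of these were settled within the respective lemmas, so the theorem is an immediate corollary.
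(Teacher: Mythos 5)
Your proposal is correct and matches the paper exactly: the paper states that "the previous two lemmas immediately yield" the theorem, i.e.\ it too obtains the equality by combining the upper-bound lemma (constructive generator $S_{a}\cup S_{b}\cup S_{c}$) with the lower-bound lemma (any smaller set fails). Your added check that the quantity $N$ and the definition of $\Delta$ coincide in both lemmas is exactly the right bookkeeping point, and it does hold, so nothing further is needed.
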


Theorem \ref{Tm_cactus} enables us to easily draw from it a simple upper bound
for the mixed metric dimension of a cactus graph, but we first need to
introduce several notions and observations. First, we say that cycles $C_{i}$
and $C_{j}$ of a cactus graph $G$ are \emph{neighbors} if there is a path from
$C_{i}$ to $C_{j}$ which does not share an internal vertex with any cycle in
$G.$ Since we assumed $G$ is connected, if $G$ has more than one cycle then
every cycle in $G$ has at least one neighboring cycle. Therefore,
$\mathrm{rt}(C_{i})\geq1$ for every cycle $C_{i}$ even without leaves present
in $G.$

\begin{corollary}
\label{Cor_leafless}Let $G$ be a cactus graph with $c\geq2$ cycles and without
leaves. Then $\mathrm{mdim}(G)\leq2c$ and the equality is attained if and only
if every cycle in $G$ has exactly one root vertex.
\end{corollary}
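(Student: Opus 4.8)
The plan is to derive the result directly from the exact formula in Theorem~\ref{Tm_cactus}. Since $G$ is a cactus with $c\geq 2$ cycles and no leaves, we have $L_1(G)=0$, so the formula reduces to
\[
\mathrm{mdim}(G)=\sum_{i=1}^{c}\max\{3-\mathrm{rt}(C_i),0\}+\Delta,
\]
where $\Delta$ counts those cycles $C_i$ with $\mathrm{rt}(C_i)\geq 3$ admitting no geodesic triple of root vertices. The strategy is to bound each cycle's contribution to the right-hand side by $2$, and then to characterize when all contributions equal exactly $2$.

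First I would establish the key preliminary fact, already noted in the paragraph preceding the corollary: because $G$ is connected with $c\geq 2$ cycles, every cycle has a neighboring cycle, hence $\mathrm{rt}(C_i)\geq 1$ for every $i$. I would then split the cycles into two groups according to their root count. For a cycle with $\mathrm{rt}(C_i)\in\{1,2\}$, its contribution to the $\max$-sum is $3-\mathrm{rt}(C_i)\in\{1,2\}$, while it contributes nothing to $\Delta$ (since $\Delta$ only counts cycles with $\mathrm{rt}\geq 3$); thus such a cycle contributes at most $2$ in total, with equality exactly when $\mathrm{rt}(C_i)=1$. For a cycle with $\mathrm{rt}(C_i)\geq 3$, the $\max$-term is $0$ and the cycle contributes either $0$ or $1$ to $\Delta$; hence its total contribution is at most $1$. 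Summing over all $c$ cycles yields $\mathrm{mdim}(G)\leq 2c$.

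For the equality characterization, the inequality $\mathrm{mdim}(G)\leq 2c$ is tight precisely when each cycle attains its maximal per-cycle contribution of $2$. From the case analysis above, a cycle with $\mathrm{rt}(C_i)\geq 3$ can contribute at most $1<2$, so equality forces every cycle to have $\mathrm{rt}(C_i)\leq 2$; and among those, only $\mathrm{rt}(C_i)=1$ gives the contribution $2$, whereas $\mathrm{rt}(C_i)=2$ gives only $1$. Therefore $\mathrm{mdim}(G)=2c$ if and only if $\mathrm{rt}(C_i)=1$ for every cycle $C_i$, as claimed.

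The argument is essentially bookkeeping once the formula and the observation $\mathrm{rt}(C_i)\geq 1$ are in hand, so I do not anticipate a genuine obstacle. The one point requiring care is the bounded contribution of high-root cycles: one must confirm that the $\Delta$-term can raise a cycle's contribution by at most $1$ and only for cycles with $\mathrm{rt}(C_i)\geq 3$, so that these cycles never reach the value $2$. This is exactly what keeps the equality condition clean and rules out large-root cycles from the extremal configuration.
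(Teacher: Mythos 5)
Your proposal is correct and follows essentially the same route as the paper: substitute $L_1(G)=0$ into the formula of Theorem~\ref{Tm_cactus}, use the observation that $\mathrm{rt}(C_i)\geq 1$ in a connected cactus with $c\geq 2$ cycles, and bound each cycle's combined contribution to the sum and to $\Delta$ by $2$, with equality exactly when $\mathrm{rt}(C_i)=1$. If anything, your case analysis is slightly more careful than the paper's (a cycle with $\mathrm{rt}(C_i)\geq 3$ admitting a geodesic triple of root vertices contributes $0$, a case the paper glosses over), but this does not change the argument.
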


\begin{proof}
Considering the formula for $\mathrm{mdim}(G)$ from Theorem \ref{Tm_cactus},
note that $L_{1}(G)=0$ since $G$ has no leaves. Also, note that every cycle
$C_{i}$ contributes to $\mathrm{mdim}(G)$ by either $2$ or $1.$ The
contribution of the cycle $C_{i}$ to $\mathrm{mdim}(G)$ equals $2$ when
$\mathrm{rt}(G)=1$ (since then $\max\{3-\mathrm{rt}(C_{i}),0\}=2$ and $C_{i}$
contributes $0$ to $\Delta$). On the other hand, the contribution of $C_{i}$
to $\mathrm{mdim}(G)$ can be $1$ in two situations: when $\mathrm{rt}(G)=2$
(since then $\max\{3-\mathrm{rt}(C_{i}),0\}=1$ and $C_{i}$ contributes $0$ to
$\Delta$) or when $\mathrm{rt}(C_{i})\geq3$ and there is no geodesic triple of
root vertices on $C_{i}$ (since then $\max\{3-\mathrm{rt}(C_{i}),0\}=0$ and
$C_{i}$ contributes $1$ to $\Delta$).

We conclude that $\mathrm{mdim}(G)$ will be the greatest when all cycles in
$G$ have $\mathrm{rt}(G)=1$ in which case $\mathrm{mdim}(G)=c\cdot
\max\{3-\mathrm{rt}(C_{i}),0\}=2c$ (an example of such a graph is illustrated
by Figure \ref{Figure23}). For any cactus graph the value of $\mathrm{mdim}%
(G)$ can be only smaller, i.e. $\mathrm{mdim}(G)\leq2c$, which concludes the proof.
\end{proof}

\begin{figure}[h]
\begin{center}
\includegraphics[scale=1.0]{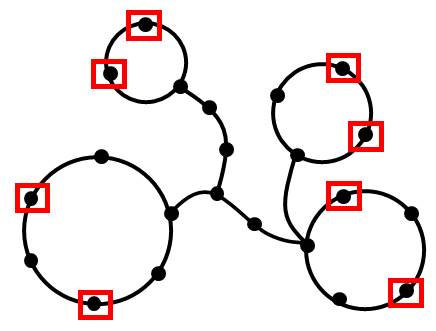}
\end{center}
\par
\caption{An example of a graph with $\mathrm{mdim}(G)=2c$. A mixed metric
generator $S$ of the smallest cardinality is emphasized on the graph, it
consists of two vertices from every cycle chosen so that there is a geodesic
triple of $S$-active vertices on every cycle.}%
\label{Figure23}%
\end{figure}

Notice that the presence of leafs in a cactus graph with at least two cycles
does not decrease the minimum number of root vertices on every cycle, i.e.
even if a cactus graph has a leaf it still holds $\mathrm{rt}(C_{i})\geq1$ for
every cycle $C_{i}$. Also, adding leaves to a leafless cactus graph with at
least two cycles does not necessarily increase $\mathrm{rt}(C_{i})$ either,
since leaves can be added to vertices on cycles which are already root
vertices or to the internal vertices of the paths connecting cycles.
Therefore, the following corollary also holds.

\begin{corollary}
\label{Cor_cactusBound}Let $G$ be a cactus graph with $c\geq2$ cycles. Then
$\mathrm{mdim}(G)\leq L_{1}(G)+2c$ and the bound is obtained if and only if
every cycle in $G$ has exactly one root vertex.
\end{corollary}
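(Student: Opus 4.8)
The plan is to derive Corollary \ref{Cor_cactusBound} directly from Theorem \ref{Tm_cactus} together with the already-established leafless case, Corollary \ref{Cor_leafless}, by treating the leaves as a contribution that adds linearly and does not interfere with the per-cycle bound. The key structural fact I would invoke is the observation stated just before the corollary: for a connected cactus graph with $c\geq 2$ cycles, every cycle is connected to the rest of the graph through at least one root vertex, so $\mathrm{rt}(C_{i})\geq 1$ holds for every cycle regardless of whether leaves are present. This guarantees that the term $\max\{3-\mathrm{rt}(C_{i}),0\}$ is at most $2$ for every cycle.

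First I would bound each summand. Exactly as in the proof of Corollary \ref{Cor_leafless}, I observe that each cycle $C_{i}$ contributes to the sum $\sum_{i=1}^{c}\max\{3-\mathrm{rt}(C_{i}),0\}+\Delta$ a total of either $2$ (when $\mathrm{rt}(C_{i})=1$) or at most $1$ (when $\mathrm{rt}(C_{i})=2$, or when $\mathrm{rt}(C_{i})\geq 3$ with no geodesic triple, where the $\Delta$ term contributes $1$ but the max term contributes $0$), or $0$ (when $\mathrm{rt}(C_{i})\geq 3$ and a geodesic triple of root vertices exists). Since $\mathrm{rt}(C_{i})\geq 1$ rules out the possibility $\mathrm{rt}(C_{i})=0$, every cycle contributes at most $2$. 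Summing over all $c$ cycles gives
\[
\sum_{i=1}^{c}\max\{3-\mathrm{rt}(C_{i}),0\}+\Delta \leq 2c,
\]
and adding the leaf term from Theorem \ref{Tm_cactus} yields $\mathrm{mdim}(G)\leq L_{1}(G)+2c$.

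Next I would characterize equality. The bound $\mathrm{mdim}(G)=L_{1}(G)+2c$ forces every cycle to achieve its maximal contribution of $2$, which by the case analysis above happens precisely when $\mathrm{rt}(C_{i})=1$ for every cycle $C_{i}$. Conversely, if every cycle has exactly one root vertex, then each summand equals $2$ and $\Delta=0$, so the formula of Theorem \ref{Tm_cactus} gives exactly $L_{1}(G)+2c$. Here I would note explicitly, using the remark preceding the corollary, that having leaves present is fully consistent with $\mathrm{rt}(C_{i})=1$ on every cycle: leaves can be attached at the unique root vertex of a cycle or along the connecting paths' internal vertices without creating a second root vertex, so the equality case is genuinely attainable even when $L_{1}(G)>0$.

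The only subtle point — and the step I expect to require the most care — is confirming that the presence of leaves never lowers a cycle's contribution below what the leafless analysis predicts and never introduces the degenerate case $\mathrm{rt}(C_{i})=0$. This is exactly where I rely on the connectivity observation that each cycle has a neighboring cycle reachable by a cycle-internal-vertex-free path, forcing at least one root vertex per cycle independently of the leaves. Once that is secured, the argument is a clean term-by-term bound followed by reading off the equality condition from the same per-cycle case distinction, so no further obstacle arises.
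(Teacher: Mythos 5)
Your proposal is correct and follows essentially the same route as the paper: the paper justifies this corollary by combining the formula of Theorem \ref{Tm_cactus} with the per-cycle contribution analysis from the proof of Corollary \ref{Cor_leafless}, together with the observation (stated just before the corollary) that $\mathrm{rt}(C_{i})\geq 1$ holds for every cycle even when leaves are present. Your explicit term-by-term bound and equality analysis is exactly what the paper leaves implicit, so there is nothing to add.
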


\section{Concluding remarks}

The natural folowing step would be to consider graphs in which cycles are not
necessarily edge disjoint, i.e. general graphs. Before we proceede with a bit
more detailed comment on that possible direction of the further research, let
us introduce a result which is of interest in such considerations.

\begin{proposition}
\label{Prop_kapa3}Let $G$ be a $3$-connected graph. Then $\mathrm{mdim}%
(G)<2c(G).$
\end{proposition}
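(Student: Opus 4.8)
The plan is to play two bounds that point in opposite directions against each other: a crude universal upper bound on the mixed metric dimension, and a lower bound on the cyclomatic number that $3$-connectivity forces. Concretely, I would first observe that the whole vertex set $V(G)$ is always a mixed metric generator, so that $\mathrm{mdim}(G)\le n$ for every connected graph $G$ on $n$ vertices; then I would show that $3$-connectivity makes $c(G)$ so large that $2c(G)$ strictly overshoots $n$.

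For the upper bound I would verify directly that $V(G)$ distinguishes any pair $x,x'$ of elements of $V(G)\cup E(G)$, splitting into three cases. If $x,x'$ are two distinct vertices, then $x$ itself distinguishes them, since $d(x,x)=0$ while $d(x,x')\ge 1$. If $x=u$ is a vertex and $x'$ an edge with $u$ not incident to $x'$, then $u$ again works, as $d(u,u)=0$ while $d(u,x')\ge 1$; and if $u$ is an endpoint of $x'$, the other endpoint of $x'$ distinguishes them. If $x$ and $x'$ are two edges sharing no vertex, any endpoint of $x$ distinguishes them, and if they share a vertex, the endpoint of $x$ not lying on $x'$ does the job (this endpoint exists and is at distance $0$ from $x$ but positive distance from $x'$, because distinct edges of a simple graph meet in at most one vertex). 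This yields $\mathrm{mdim}(G)\le n$.

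For the lower bound on $c(G)$ I would use that vertex connectivity never exceeds the minimum degree, so in a $3$-connected graph $\deg(v)\ge\kappa(G)\ge 3$ for every $v$. Summing degrees gives $2m=\sum_{v}\deg(v)\ge 3n$, hence $m\ge \tfrac{3}{2}n$ and
\[
c(G)=m-n+1\ge \tfrac{3}{2}n-n+1=\tfrac{n}{2}+1,
\]
so that $2c(G)\ge n+2$. Combining the two estimates gives
\[
\mathrm{mdim}(G)\le n< n+2\le 2c(G),
\]
which is exactly the assertion.

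I do not anticipate a real obstacle here: the argument is short once one notices that the trivial bound $\mathrm{mdim}(G)\le n$ already lies at least two units below $2c(G)$. The only points requiring care are the routine case analysis certifying that $V(G)$ is a mixed metric generator (in particular the edge--edge subcase, where one must invoke simplicity of $G$), and the bookkeeping that keeps the concluding inequality strict rather than merely non-strict.
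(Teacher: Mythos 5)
Your proof is correct and follows essentially the same route as the paper's: the trivial bound $\mathrm{mdim}(G)\leq n$ combined with the fact that $3$-connectivity forces $m\geq\frac{3}{2}n$, hence $n<2c(G)$. The only difference is that you spell out the verification that $V(G)$ is a mixed metric generator and derive the edge count from the degree-sum formula, steps the paper treats as immediate.
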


\begin{proof}
Notice that the definition of $\mathrm{mdim}(G)$ implies $\mathrm{mdim}(G)\leq
n.$ For a graph $G$ with $\kappa(G)\geq3$ it holds that $m\geq\left\lceil
\frac{3}{2}n\right\rceil ,$ i.e. $m>\frac{3}{2}n-1$ which is equivalent to
$n<2m-2n+2.$ Therefore, for such a graph we have
\[
\mathrm{mdim}(G)\leq n<2m-2n+2=2c(G)
\]
and we are done.
\end{proof}

Let us now summarize what has been done so far in order to comment what might
be done next. So far we have proven that in the graphs with edge disjoint
cycles, i.e. cactus graphs, it holds that $\mathrm{mdim}(G)\leq L_{1}(G)+2c$
where $c$ is the number of cycles in $G.$ Notice that this bound was proved
for trees and unicyclic graphs, as special subclasses of cactus graphs, in
Proposition \ref{Prop_Kelm} and Corollary \ref{Cor_UnicyclicBound}
respectively. Finally, the bound was proven in Corollary \ref{Cor_cactusBound}
for general cactuses. Recalling that in graphs with edge disjoint cycles the
number of cycles $c$ equals the cyclomatic number $c(G)$, this bound can be
generalized as $\mathrm{mdim}(G)\leq L_{1}(G)+2c(G)$ and the question posed
whether that bound holds for general graphs. We conjecture that the answer to
that question is affirmative.

\begin{conjecture}
\label{Con1}Let $G\not =C_{n}$ be a graph and $c(G)$ its cyclomatic number.
Then
\[
\mathrm{mdim}(G)\leq L_{1}(G)+2c(G).
\]

\end{conjecture}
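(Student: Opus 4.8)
The plan is to prove the bound by induction on the cyclomatic number $c(G)$, using the block--cut-vertex decomposition of $G$ together with a gluing argument that generalizes the enclosure technique already used for cacti (Lemma \ref{Lemma_mixed} and Observation \ref{Obs_enclosure}). The base cases are exactly the classes for which the bound is already known: for $c(G)=0$ the graph is a tree and Proposition \ref{Prop_Kelm} gives equality, while for $c(G)=1$ the graph is unicyclic and Corollary \ref{Cor_UnicyclicBound} gives the bound (the excluded case $G=C_{n}$ being precisely where the unicyclic estimate fails). The engine of the induction rests on two facts that are additive over the decomposition: the cyclomatic number splits as $c(G)=\sum_{j}c(B_{j})$ over the blocks $B_{j}$ of $G$, and every cycle of $G$ lies inside a single block, so leaves and cut vertices play here the role that leaves and root vertices played in the cactus case.

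First I would isolate the reduction to $2$-connected graphs. If $G$ has a cut vertex $v$, write $G=G_{1}\cup G_{2}$ with $G_{1}\cap G_{2}=\{v\}$; since $c$ and, up to the treatment of $v$, also $L_{1}$ are additive across $v$, it suffices to take local generators $S_{1},S_{2}$ meeting the inductive bound in $G_{1},G_{2}$ and to prove that $S_{1}\cup S_{2}$ resolves $G$. Pairs lying inside one part are handled by the corresponding local generator, while a pair $x,x'$ split by $v$ is enclosed or half-enclosed through $v$ exactly as in Case~2 of the proof of Lemma \ref{Lemma_mixed}, so Observation \ref{Obs_enclosure} finishes them. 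This reduces the conjecture to the statement that every $2$-connected graph $B\neq C_{n}$ satisfies $\mathrm{mdim}(B)\le 2c(B)$, since such a block has no leaves; blocks that happen to be cycles contribute at most $2$ once they are attached to the rest of $G$, reproducing the bookkeeping of Theorem \ref{Tm_cactus} and confining the off-by-one of a bare cycle to the single excluded graph $C_{n}$.

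For the $2$-connected case the plan is a second induction, this time along an open ear decomposition $B=C\cup P_{1}\cup\cdots\cup P_{r}$ with $r=c(B)-1$. The base case is taken to be the theta graph $C\cup P_{1}$, analyzed directly to obtain $\mathrm{mdim}\le 4=2c$; crucially, the first ear is made to absorb the surplus coming from $\mathrm{mdim}(C)=3>2c(C)$, after which I would show that adjoining each further ear $P_{i}$ raises the mixed metric dimension by at most $2$, giving $\mathrm{mdim}(B)\le 4+2(r-1)=2(r+1)=2c(B)$. The per-ear step is the local analogue of the geodesic-triple argument: the internal vertices of an ear form a degree-$2$ path, and two well-placed generator vertices should suffice to orient it and resolve its edges against the rest of $B$. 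Where $B$ is $3$-connected one may bypass the ear induction entirely and invoke Proposition \ref{Prop_kapa3}, which already yields $\mathrm{mdim}(B)<2c(B)$.

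The hard part will be the two gluing steps. In the ear induction, adding an ear creates new shortest paths throughout $B$, so the enclosure/half-enclosure dichotomy that is transparent on a single cycle must be re-established in the presence of several competing geodesics between the endpoints of the ear; controlling these non-unique shortest paths, and verifying that the two new generator vertices interact correctly with generators placed on earlier ears, is where the routine calculations become delicate. Likewise, the cut-vertex gluing presupposes that each local generator possesses a \emph{survives-gluing} property, namely that every element of a block lies on a shortest path between two active vertices or is half-enclosed, strong enough that a faraway generator in another block cannot create an unresolved pair; formulating this property for general $2$-connected blocks, rather than for cycles where the geodesic triple suffices, is the principal obstacle I expect. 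I would attack the remaining $2$-connected, non-$3$-connected blocks by a further recursion on their $2$-cuts, peeling off triconnected pieces to which Proposition \ref{Prop_kapa3} applies and recombining them through the same enclosure arguments.
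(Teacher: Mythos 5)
This statement is posed in the paper as a \emph{conjecture}: the authors prove it only for cacti (Theorem \ref{Tm_cactus}, Corollary \ref{Cor_cactusBound}) and note via Proposition \ref{Prop_kapa3} that it holds for $3$-connected graphs, explicitly leaving the remaining $1$-connected and $2$-connected cases open. So your proposal must stand on its own as a proof, and it does not: its two load-bearing steps are precisely the open content of the conjecture, and both are asserted rather than proved. The ear-induction step --- ``adjoining each further ear $P_i$ raises the mixed metric dimension by at most $2$'' --- is not a local computation. Adding an ear creates new shortest paths throughout the graph, so a minimum generator of $B'=C\cup P_1\cup\dots\cup P_{i-1}$ may stop resolving pairs far away from the new ear; the inductive hypothesis $\mathrm{mdim}(B')\le 2c(B')$ gives you only a number, not any structural handle on \emph{which} generator of that size exists, so there is nothing to extend by two vertices. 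To make the induction move you would need a stronger invariant (a generator with an explicit enclosure-type robustness property that is preserved under ear addition); you acknowledge yourself that formulating such a property for general $2$-connected graphs is ``the principal obstacle,'' and that formulation is exactly the conjecture's difficulty, not a deferred routine detail.

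The cut-vertex reduction also has a concrete quantitative leak. $L_1$ is not additive across a cut vertex: if $G=G_1\cup G_2$ with $G_1\cap G_2=\{v\}$ and $v$ has degree $1$ in each $G_i$, then $L_1(G_1)+L_1(G_2)=L_1(G)+2$, so taking the union of generators that meet the local bounds overshoots the global bound by $2$ (already for a path split at an internal vertex: $\mathrm{mdim}(P_n)=2$, while the two halves supply $4$ leaves). Moreover, the statement you reduce to is false for cycle blocks, since $\mathrm{mdim}(C_n)=3>2=2c(C_n)$; you remark that an attached cycle ``contributes at most $2$,'' which is true in the cactus setting, but making it precise in general requires a \emph{relative} version of the bound for a block with distinguished attachment vertices --- the analogue of the paper's root-vertex/$S$-active mechanism --- and proving that relative statement for arbitrary $2$-connected blocks is again the open problem rather than bookkeeping. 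As written, your text is a sensible research programme whose hard cases coincide with the cases the paper leaves open; it is not a proof.
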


Notice that Proposition \ref{Prop_kapa3} implies that Conjecture \ref{Con1}
holds for all $3$-connected graphs, as in such graphs there are no leaves.
Therefore, it remains to verify that the conjecture holds for other
$1$-connected graphs besides cactus graphs and for $2$-connected graphs.

\bigskip\noindent\textbf{Acknowledgements.}~~The authors acknowledge partial
support Slovenian research agency ARRS program \ P1--0383 and ARRS project
J1-1692 and also Project KK.01.1.1.02.0027, a project co-financed by the
Croatian Government and the European Union through the European Regional
Development Fund - the Competitiveness and Cohesion Operational Programme.

\end{document}